\theoremstyle{plain}
\newtheorem{theorem}{Theorem}    
\newtheorem*{theorem*}{Theorem}  
\newtheorem{lemma}[theorem]{Lemma}
\theoremstyle{definition}
\newtheorem{remark}[theorem]{Remark}
\setlist[enumerate]{%
  labelwidth=\parindent,
  labelindent=\parindent,
  leftmargin=0pt,
  labelsep=*,
  align=left,
  itemindent=\dimexpr\parindent+1.5em\relax,
  itemsep=2pt,
  topsep=2pt,
  partopsep=0pt,
  parsep=0pt
}
\title{A Note on Lagrange Subsets of Finite Groups}
\author{Mikhail Kabenyuk}
\date{}
\begin{document}
\maketitle
\begin{abstract}
In a finite group, a subset is called a Lagrange subset if its size divides the group order, and a factor if it admits a complementary subset. We provide a new and comparatively direct proof of the classification of groups in which every Lagrange subset is a factor. We show that any nontrivial such group must be a cyclic group of prime order, the cyclic group of order 4, or an elementary abelian group of order 4, 8, or 9.
\end{abstract}

\section{Introduction}
\label{section:Introduction}
Let $G$ be a finite group.  A subset $A \subseteq G$ is called a \textit{Lagrange subset} if $\lvert A\rvert$ divides $\lvert G\rvert$.  If there exists a subset $B \subseteq G$ such that every element $x\in G$ can be written \textit{uniquely} in the form
\[
x = ab,
\quad a\in A,\; b\in B,
\]
then $A$ is said to be a (left) \textit{factor} of $G$.  In this situation one immediately has
\[
\lvert G\rvert = \lvert A\rvert \,\lvert B\rvert,
\]
and both $A$ and $B$ are Lagrange subsets of $G$.

The problem of factoring a finite group $G$ into two (or more) Lagrange subsets goes back to Haj\'os’s 
combinatorial proof of Minkowski’s conjecture on lattice tilings by unit cubes \cite{Hajos1941}.  
In particular, Haj\'os\ showed that any lattice tiling of $\mathbb{R}^n$ by unit $n$-cubes must contain two 
cubes sharing a full $(n-1)$-face, and he recast that argument in group‑theoretic terms to characterize exactly 
which cyclic groups admit factorizations by arithmetic progressions \cite{Hajos1941}.  
In the broader abelian setting, subsequent work of R\'edei \cite{Redei1952}, 
Sands \cite{Sands1974}, Szab\'o \cite{Szabo2004}, and others has connected these set‑factorizations with tiling problems, zero‑sum sequences, and 
related combinatorial phenomena.  A good survey of these developments can be found in 
\cite{Szabo-Sands2009}.

Bernstein \cite{Bernstein1968} extended Haj\'os’s theorem to the non‑abelian setting, showing that many of the 
combinatorial tiling arguments carry over to arbitrary finite groups.  A related thread concerns the 
existence of \textit{minimal logarithmic signatures} (MLS): conjecturally, every finite group $G$ admits 
a factorization
\[
G = A_1 A_2 \cdots A_k
\]
where each $\lvert A_i\rvert$ is prime or equals $4$.  
It is known that every finite solvable group has an 
MLS, and that if a normal subgroup $K\lhd G$ and the quotient $G/K$ both admit MLS then so does $G$ 
\cite{Rahimipour2018}.  Consequently, any counterexample to the MLS conjecture must be a non-abelian 
simple group, reducing the problem to the finite simple groups.

Hooshmand \cite[Problem 19.35]{Kourovka2018} 
(see also \cite{Hooshmand2014,Banakh2018multifactorizable, Hooshmand2021}) 
asked whether for every factorization $\lvert G\rvert = m_1\cdots m_k$, there exist subsets 
$A_i\subseteq G$ with $\lvert A_i\rvert=m_i$ and $G=A_1\cdots A_k$.  
The case $k=2$ has been studied in \cite{Bildanov2020, Hooshmand2021}, where it is shown that a finite 
group is $2$‑factorizable if and only if every finite simple group is $2$‑factorizable, 
and all simple groups of order up to $10~000$ are $2$-factorizable \cite{Bildanov2020}.  

For $k=3$, Bergman \cite{Bergman2020} proved that the alternating group $A_4$ is not $3$‑factorizable.
Brunault \cite{Banakh2018Factorizable} used computer methods to show that $A_5$ has no 
$(2,3,5,2)$\nobreakdash-factorization (in fact, $A_5$ does not even admit 
a $(2,15,2)$\nobreakdash-factorization; this was shown in \cite{Kab2021} without computer calculations). 
More generally, it is proved in \cite{Kab2021} that for every integer $k\geq3$, 
there exist integers $m_1,m_2,\ldots,m_k$ all greater than $1$
and a finite group of order $m_1m_2\ldots m_k$ that does not have any 
$(m_1,m_2,\ldots,m_k)$\nobreakdash-factorization.
Among groups of order at most $100$, there are only eight groups that do not admit a $k$-factorization 
for at least one $k\geq2$; all other groups are $k$-factorizable for every admissible $k$ \cite{Kab2021}.
It was also shown that the simple groups of orders $168$ and $360$ are $k$‑factorizable 
for every admissible $k$ \cite{Kab2024}.

Recently, Chin, Wang, and Wong \cite{Chin-Wang-Wong} introduced the notion of \textit{complete 
factorization}. Subsets $A_1, \ldots, A_k$ of $G$ form a complete factorization if they are pairwise 
disjoint and every $g \in G$ is uniquely represented as $g = a_1 \cdots a_k$ with $a_i \in A_i$. 
They proved that if $G$ is a finite abelian group and $|G|=m_1\ldots m_k$ 
where $m_1,\ldots,m_k$ are integers greater than $1$ and $k>2$, then 
there exist subsets $A_1,\ldots,A_k$ of $G$ (with $|A_i|=m_i$ for all $i=1,2,\ldots,k$) 
which form a complete factorization of the group $G$.
This result was extended to nilpotent groups in \cite{Kab2023}.

A finite group $G$ is said to have the \textit{strong CFS} (converse of factorization by subsets) 
property if every Lagrange subset of $G$ is a factor of $G$. The notions of a Lagrange subset and a 
strong CFS group were introduced in \cite{HooshKoh2025}, where the following was proved:

\begin{theorem}[\cite{HooshKoh2025}]
A nontrivial finite group $G$ satisfies the strong CFS property if and only if $G$ is either
\begin{itemize} 
  \item a cyclic group of prime order, or
  \item one of the four groups $C_4$, $C_2^2$, $C_2^3$, or $C_3^2$.
\end{itemize}
\end{theorem}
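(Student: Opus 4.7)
The plan is to handle the two directions separately; the converse is the substantive part.

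For sufficiency, I verify directly that each listed group has the strong CFS property. For $C_p$ the only Lagrange sizes are $1$ and $p$, and both are trivially factors. For the four exceptional groups the nontrivial sizes to check are small---$2$ in the order-$4$ groups, $2$ and $4$ in $C_2^3$, and $3$ in $C_3^2$---and in each case one lists the subsets up to automorphism and exhibits a complement.

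For necessity, suppose $G$ has the strong CFS property. The argument rests on two general reductions. The first is \emph{subgroup heredity}: if $H \le G$ and $A \subseteq H$ is a Lagrange subset of $H$, then the $G$-factorization $G = A \cdot B$ restricts to $H = A \cdot (B \cap H)$, since for $h \in H$ and $h = ab$ with $a \in A \subseteq H$ one has $b = a^{-1}h \in H$. Thus the strong CFS property passes to subgroups. The second is the \emph{parity lemma}: if $|G|$ is even and some $g \in G$ has odd order $m > 1$, then $\{e, g\}$ cannot be a factor, because any complement $B$ would give the partition $\langle g \rangle = (B \cap \langle g \rangle) \sqcup g(B \cap \langle g \rangle)$, impossible for $m$ odd. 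Consequently groups of even order containing an element of odd order $> 1$---such as $C_{2p}$ or $D_p$---are ruled out immediately, and every even-order $G$ with strong CFS is a $2$-group.

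Combining these reductions with subgroup heredity, necessity is equivalent to producing, in each \emph{minimally forbidden} group (a forbidden group whose every proper subgroup is on the allowed list), an explicit non-factor Lagrange subset. A short structural argument classifies the minimally forbidden groups as: $C_8$, $C_4 \times C_2$, $D_4$, $Q_8$, and $C_2^4$ (the $2$-groups); $C_{p^2}$ for every odd prime $p$, $C_p^2$ for $p \ge 5$, $C_3^3$, and the Heisenberg group of order $27$ (the odd prime-power groups); and, for distinct odd primes $p < q$, both $C_{pq}$ and the non-abelian group of order $pq$ when $p \mid q - 1$. In each case one exhibits a tailored non-factor subset; a representative example is $\{0, 1, 3\} \subset C_9$, for which the only shifts $A + b$ disjoint from $A$ are $b \in \{4, 5\}$, and a short check shows neither can be extended to a full partition.

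The principal obstacle is carrying out these constructions uniformly. Non-abelian cases ($D_4$, $Q_8$, the non-abelian groups of order $pq$) must be handled without the commutativity underpinning Haj\'os-type arguments, while $C_{p^2}$ and $C_p^2$ for growing $p$ require a construction whose non-factor property does not degenerate with $p$---a character-theoretic or Fourier-analytic approach seems natural for these infinite families. Ensuring that the classification of minimally forbidden groups is exhaustive, and that each one admits an explicit bad Lagrange subset, constitutes the bulk of the technical work.
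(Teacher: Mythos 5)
Your two reductions are sound: the heredity argument (restricting a $G$-factorization of $A\subseteq H$ to $H=A\,(B\cap H)$) is correct and is a clean converse to the paper's Lemma~\ref{lemma:subset of subgroup}, and your parity lemma is essentially the paper's Lemma~\ref{lemma:small subset}\,(i). The sufficiency direction and the $C_9$ example are fine. But the necessity argument has a genuine gap: after reducing to minimally forbidden groups you must actually exhibit a non-factor Lagrange subset in each of them, and you do this only for $C_9$. For the infinite families $C_{p^2}$, $C_p^2$ ($p\ge 5$), $C_{pq}$, and the non-abelian groups of order $pq$ --- as well as for $C_3^3$, the Heisenberg group of order $27$, $C_2^4$, $Q_8$, $C_4\times C_2$, $C_8$, and $D_4$ --- you defer the construction to unspecified ``technical work,'' suggesting a character-theoretic approach may be needed. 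That deferred construction is the heart of the theorem, so as written the proof is incomplete; moreover, the exhaustiveness of your list of minimally forbidden groups (which for odd composite orders quietly needs solvability or a Sylow/Hall analysis, e.g.\ to dispose of orders like $9q$) is also left unverified.

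The missing idea is a single uniform and entirely elementary construction, which is the paper's Lemma~\ref{lemma:main}: if $G$ has a proper subgroup $H$ with $|H|\ge 5$, pick $h_0\in H\setminus\{e\}$, $h_1\in H\setminus\{e,h_0,h_0^{-1}\}$, $g\in G\setminus H$, and set $A=(H\setminus\{e,h_0\})\cup\{g,h_1g\}$. A three-case check on which element $a\in A$ satisfies $a^{-1}\in B$ (forced because $e\notin A$) shows $A$ cannot be a left factor. This one lemma kills every group possessing a proper subgroup of order at least $5$ --- in particular all of your infinite families at once, with no Fourier analysis and no need to classify minimally forbidden groups --- and together with the parity lemma leaves only $C_8$, the order-$8$ groups with two distinct cyclic subgroups of order $4$ (handled by the paper's Lemma~\ref{lemma:groups of order 8}), $D_4$, and $C_9$ for ad hoc verification. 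Note the structural difference from your plan: heredity pushes the problem \emph{down} into a small subgroup where there is no room to build an obstruction, whereas Lemma~\ref{lemma:main} uses the subgroup together with an outside element to build the obstruction \emph{inside $G$ itself}. Without some such construction, your outline does not yet constitute a proof.
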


The authors of \cite{HooshKoh2025} used Cayley graph arguments to constrain the possible orders of $G$, 
reducing the classification to an examination of about twenty concrete groups. In this paper, we offer a 
streamlined proof of the "only if" direction, which breaks down into four independent steps:
\begin{enumerate}[label=(\arabic*)]
  \item Analysis of groups with "large" proper subgroups (Lemma \ref{lemma:main});
  \item Groups of even order containing elements of odd order (Lemma \ref{lemma:small subset});
  \item Investigation of groups of order $8$ (Lemma \ref{lemma:groups of order 8});
  \item A direct check for the three remaining groups: $D_4$, $C_8$, and $C_9$.
\end{enumerate}

\vspace{1ex}
\noindent\textbf{Notation.} Throughout, $G$ is a finite group with identity $e$. For any $A \subseteq G$ and $x \in G$, we denote $Ax = \{ax \mid a \in A\}$. We write $\langle g \rangle$ for the cyclic subgroup generated by $g$. A subset $A$ is a \textit{left factor} if $G = \bigsqcup_{b \in B} Ab$ for some $B \subseteq G$. 

With these conventions in place, we turn in Section \ref{section:lemmas} to our lemmas, 
and then in Section \ref{section:proof theorem} to the completion of the proof.

\section{Lemmas}
\label{section:lemmas}
Our main objective is to provide a new proof of the “only if” direction of the theorem. 
While this approach dispenses with the geometric Cayley graph arguments (arguably a drawback), 
it avoids the tedious case-by-case verification of numerous concrete groups (a significant advantage). 
Our proof relies on two key lemmas (Lemma \ref{lemma:main} and Lemma \ref{lemma:groups of order 8}), which reduce the argument to the consideration of only three groups: $D_4$, $C_8$, and $C_9$.

\begin{lemma}\label{lemma:subset of subgroup}
Let $H$ be a subgroup of $G$ and let $A\subseteq H$.
If $A$ is a factor of $H$, then $A$ is a factor of $G$.
\end{lemma}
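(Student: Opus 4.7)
The plan is to lift a factorization of $H$ to a factorization of $G$ by appending a right transversal of $H$ in $G$. Since $A$ is a factor of $H$, there is a subset $B_0 \subseteq H$ such that every $h \in H$ has a unique expression $h = ab_0$ with $a \in A$ and $b_0 \in B_0$. I would then choose a right transversal $T$ of the cosets of $H$ in $G$, so that $G = \bigsqcup_{t \in T} Ht$, and set $B = B_0 T = \{b_0 t : b_0 \in B_0,\ t \in T\}$.

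Next I would verify that this $B$ works. Given any $g \in G$, writing $g = ht$ for the unique pair $(h,t) \in H \times T$ and then $h = ab_0$ for the unique pair $(a,b_0) \in A \times B_0$ yields $g = a(b_0 t)$, giving existence. For uniqueness, I would suppose $a_1 b_1 t_1 = a_2 b_2 t_2$ with $a_i \in A$, $b_i \in B_0$, $t_i \in T$. Since $a_i b_i \in H$, both sides lie in the respective right cosets $H t_1$ and $H t_2$, forcing $t_1 = t_2$; cancellation then gives $a_1 b_1 = a_2 b_2$ in $H$, and the uniqueness of the factorization $H = AB_0$ yields $a_1 = a_2$ and $b_1 = b_2$. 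The same cancellation shows $|B| = |B_0|\,|T| = (|H|/|A|)\cdot(|G|/|H|) = |G|/|A|$, consistent with the count.

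There is no real obstacle here; the only subtlety is making sure the two layers of uniqueness (the coset decomposition of $G$ by $H$, and the factorization of $H$ by $A$) are applied in the correct order. I would present it as a short direct verification, without any appeal to counting arguments beyond what is needed to confirm $|B|$.
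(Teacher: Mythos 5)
Your proposal is correct and follows essentially the same route as the paper: factor $H$ as $AB_0$, take a transversal $T$ of the cosets of $H$ in $G$ with $G=\bigsqcup_{t\in T}Ht$, and set $B=B_0T$. Your verification of uniqueness (first matching cosets to force $t_1=t_2$, then invoking uniqueness inside $H$) is just a more explicit version of the step the paper dispatches with the phrase ``follows directly from the uniqueness of the coset decomposition.''
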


\begin{proof}
Since $A$ is a factor of $H$, there exists a subset $B'\subseteq H$ such that
every element of $H$ can be written uniquely in the form $ab'$ with
$a\in A$ and $b'\in B'$. In particular, $H = A B'$.

Let $B''$ be a set of representatives for the left cosets of $H$ in $G$.
Then $G = H B''$. Set
\[
B = B' B'' \subseteq G.
\]
We have
\[
G = H B'' = A B' B'' = A B,
\]
so every element of $G$ can be written in the form $ab$ with $a\in A$ and $b\in B$.
The uniqueness of this representation follows directly from the uniqueness of the coset decomposition.
Thus every element of $G$ has a unique representation as $ab$ with $a\in A$ and $b\in B$,
and $A$ is a factor of $G$.
\end{proof}

\begin{lemma}
\label{lemma:small subset}
\begin{enumerate}[label=\textup{(\roman*)}]\!
    \item Let $G$ be a group of even order, and let $a\in G$, $a\neq e$. 
    The subset $\{e,a\}$ is a factor of $G$ if and only if the order of $a$ is even.
    \item Let $G$ be an elementary abelian $2$-group.
    Every subset of size $4$ is a factor of $G$.
    \item\,Let $G$ be an elementary abelian $3$-group.
    Every subset of size $3$ is a factor of $G$.
\end{enumerate}
\end{lemma}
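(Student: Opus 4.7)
My plan is to handle the three assertions separately, in each case reducing to a small subgroup via Lemma \ref{lemma:subset of subgroup}.

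For part (i), the "if" direction follows from the cyclic case: if $a$ has even order $2k$, the set $B'=\{e,a^{2},a^{4},\dots,a^{2k-2}\}$ gives the factorization $\langle a\rangle=\{e,a\}B'$, which Lemma \ref{lemma:subset of subgroup} lifts to $G$. For the converse, I would suppose $G=\{e,a\}B$ is a factorization and consider the action of $\langle a\rangle$ on $G$ by left multiplication. Its orbits are the right cosets $\langle a\rangle g$, each of size $|a|$, and each such orbit $\mathcal{O}$ is invariant under left multiplication by $a$; hence $B\cap\mathcal{O}$ and $a(B\cap\mathcal{O})$ partition $\mathcal{O}$ into two halves of equal size, forcing $|a|$ to be even.

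For part (ii), I translate so that the given subset contains $e$, writing $A=\{e,a,b,c\}\subseteq G\cong C_2^n$. Either $c=ab$, in which case $A$ is a subgroup of order $4$ and its cosets furnish the factorization, or $\{a,b,c\}$ is $\mathbb{F}_2$-linearly independent and generates a subgroup $H\cong C_2^3$. In the latter case, Lemma \ref{lemma:subset of subgroup} reduces the task to finding a complement inside $H$, which is supplied in one line by $B=\{e,abc\}$: the eight products $\{e,a,b,c,abc,bc,ac,ab\}$ exhaust $H$.

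For part (iii), an analogous translation gives $A=\{e,a,b\}\subseteq G\cong C_3^n$. Either $b\in\langle a\rangle$, so $A=\langle a\rangle$ is a subgroup of order $3$, or $a$ and $b$ are $\mathbb{F}_3$-independent, generating $H\cong C_3^2$, and I may assume $G=H$. I would take $B=\langle ab\rangle=\{e,ab,a^{2}b^{2}\}$ and verify that $A^{-1}A$ and $B^{-1}B$ meet only in $e$; this disjointness immediately forces the nine products $xy$ with $x\in A$, $y\in B$ to be pairwise distinct, so they cover $G$.

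The only nontrivial step is exhibiting the complement $B$ in the non-subgroup subcases of (ii) and (iii). No serious obstacle arises: the correct $B$ is dictated by the affine geometry over $\mathbb{F}_p$, since $A$ is a transversal to the parallel class of lines in the "unused" direction, and the translates of $A$ along a line in that direction tile $H$.
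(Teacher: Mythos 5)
Your proof is correct and follows essentially the same route as the paper: reduce to $\langle A\rangle$ via Lemma \ref{lemma:subset of subgroup}, count along $\langle a\rangle$-orbits for the converse in (i), and use the complements $\{e,abc\}$ and $\langle ab\rangle$ in (ii) and (iii), which are exactly the ones chosen in the paper. The only cosmetic differences are your orbit phrasing in (i) (the paper just intersects with $H=\langle a\rangle$ itself) and your use of the $A^{-1}A\cap B^{-1}B=\{e\}$ criterion in (iii) in place of a direct disjointness check.
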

\begin{proof}[Proof of \textup{(i)}]
First suppose that the order of $a$ is even, say $|a| = 2m$ for some positive integer $m$.
Let $H = \langle a\rangle$.
Set
\[
B = \{e,a^2,\ldots,a^{2m-2}\}.
\]
Clearly $H = \{e,a\} B$, and it is straightforward to check that
$B \cap aB = \varnothing$.
Thus $\{e,a\}$ is a factor of $H$.
By Lemma~\ref{lemma:subset of subgroup}, $\{e,a\}$ is then a factor of $G$.

Conversely, suppose that $\{e,a\}$ is a factor of $G$. 
Then there exists a subset $B\subseteq G$ such that 
\[
G = B \sqcup aB
\]
is a disjoint union.
Let $H = \langle a\rangle$. We have
\[
H = (B \cap H)\ \sqcup\ (aB \cap H).
\]
Since left multiplication by $a$ induces a bijection from $B$ onto $aB$ and
the set $H$ is invariant under left multiplication by $a$, it follows that
left multiplication by $a$ gives a bijection
\[
B \cap H \longrightarrow aB \cap H,\qquad b \mapsto ab.
\]
In particular,
\[
|B \cap H| = |aB \cap H|.
\]
Therefore,
\[
|H|
  = |B \cap H| + |aB \cap H|
  = 2\,|B \cap H|,
\]
which is even. Since $|H| = |a|$, the order of $a$ is even.
This proves (i).
\end{proof}

\begin{proof}[Proof of \textup{(ii)}]
Let $A\subseteq G$ be a subset of size $4$.
Since the existence of a factorization is invariant under left translation of $A$, 
we may assume that $e\in A$.
Write
\[
A = \{e,x,y,z\}.
\]

Let $H = \langle A\rangle$. Then $H$ is an elementary abelian $2$-group
generated by at most three elements, hence $|H|$ is either $4$ or $8$.
By Lemma~\ref{lemma:subset of subgroup}, it suffices to show that $A$ is a factor of $H$.

If $|H| = 4$, then $A = H$, and $A$ is a factor of $H$ with complement $\{e\}$.

Now assume that $|H| = 8$. There are two possibilities.
If $xyz = e$, then $z = xy$ and $A$
is a subgroup of $H$, so $A$ is a factor of $H$.

If $xyz \neq e$, then
set $t = xyz$ and $B = \{e,t\}$. We have $AB=H$ and $A\cap At=\varnothing$, so
$A$ is a factor of $H$.

In both cases $A$ is a factor of $H$, and therefore a factor of $G$.
This proves~(ii).
\end{proof}

\begin{proof}[Proof of \textup{(iii)}]
Let $A\subseteq G$ be a subset of size $3$.
As in the proof of (ii), we may assume that $e\in A$.
Write
\[
A=\{e,x,y\}.
\]

Let $H=\langle A\rangle$. Then $H$ is an elementary abelian $3$-group
generated by at most two elements, hence $|H|$ is either $3$ or $9$.
By Lemma~\ref{lemma:subset of subgroup}, it suffices to show that $A$ is
a factor of $H$.

If $|H|=3$, then $A=H$, and $A$ is a factor of $H$ with complement $\{e\}$.

Now assume that $|H|=9$.
Set $t = xy$ and $B = \{e, t, t^2\}$. 
A direct check shows that $AB = H$ and that the three sets $A$, $At$, and $At^2$ 
are pairwise disjoint, so $A$ is a factor of $H$.
This proves~(iii).
\end{proof}

\begin{remark} 
\label{rem:translation}
The property of a subset $A \subseteq G$ being a factor is invariant under translation. 
That is, if $G = AB$ is a factorization, then for any $u,v \in G$, the sets $uA$ and $Bv$ 
also form a factorization $G = (uA)(Bv)$.
Consequently, throughout the proofs, we may assume without loss of generality that the identity $e$ lies 
in the complement $B$ (by replacing $B$ with $Bb^{-1}$ for some $b \in B$).
In particular, this implies that for every $x \in B \setminus \{e\}$, 
the translate $Ax$ is disjoint from $A$. 
We will use this observation implicitly in what follows.
\end{remark}

\begin{lemma}
\label{lemma:main}
Let $H$ be a proper subgroup of a finite group $G$ with $\lvert H\rvert \ge 5$.
Choose $h_0\in H$ with $h_0\neq e$ and choose $g\in G\setminus H$.
Let $h_1$ be an element of $H$ not in the set
$\{e,h_0,h_0^{-1}\}$,
and set
\[
H'= H\setminus\{e,h_0\}.
\]
Define
\begin{equation}\label{equ:subset-not-factor}
A= H'\cup\{g,h_1g\}.
\end{equation}
Then $\lvert A\rvert = \lvert H\rvert$ and $A$ is not a left factor of $G$.
\end{lemma}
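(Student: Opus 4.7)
The plan is a proof by contradiction: suppose $A$ is a left factor with complement $B$, so $G = \bigsqcup_{b \in B} Ab$; by Remark~\ref{rem:translation} we may take $e \in B$. Observe that $e, h_0 \in H \setminus A$, since both are excluded from $H'$ by construction and neither equals $g$ nor $h_1 g$ (the latter two lying outside $H$ because $g \notin H$). The strategy is to track how the translates $Ab$ partition $H$; the resulting arithmetic, combined with $|H| \ge 5$, will pin $B$ down tightly enough to force $h_1 \in \{h_0, h_0^{-1}\}$, contradicting the hypothesis on $h_1$.

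For a general $b \in G$, I would note that $H'b$ lies in the right coset $Hb$, while $\{gb, h_1 gb\}$ lies in $gHb$; and since $g \notin H$ the left coset $g^{-1}H$ is disjoint from $H$. A direct check then yields $|Ab \cap H| = |H|-2$ when $b \in H$; $|Ab \cap H| = 2$ when $b \in g^{-1}H \setminus H$ (contributed by $gb$ and $h_1 gb$); and $|Ab \cap H| = 0$ otherwise. Writing $n$ and $m$ for the numbers of elements of $B$ in the first two classes and summing over the partition $H = \bigsqcup_{b \in B}(Ab \cap H)$ gives
\[
n(|H|-2) + 2m = |H|.
\]

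Since $e \in B \cap H$, we have $n \ge 1$. On the other hand $|H| \ge 5$ yields $2(|H|-2) > |H|$, so $n \ge 2$ is impossible. Hence $n = 1$ (realized by $b = e$) and $m = 1$; let $b_0$ be the unique element of $B$ with $b_0 \in g^{-1}H \setminus H$. The translate $A \cdot e = A$ already covers $H'$ inside $H$, so $Ab_0 \cap H = \{gb_0,\, h_1 g b_0\}$ must equal the complementary pair $\{e, h_0\}$. The two possible matchings are $gb_0 = e$ and $h_1 g b_0 = h_0$, forcing $h_1 = h_0$; or $gb_0 = h_0$ and $h_1 g b_0 = e$, forcing $h_1 = h_0^{-1}$. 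Both contradict the choice of $h_1$, so $A$ is not a left factor.

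The main obstacle is the bookkeeping in the coset analysis: one has to recognize that whether $Ab$ meets $H$ is governed only by the cosets $H$ and $g^{-1}H$, and then exploit the narrow equality $(|H|-2) + 2 = |H|$ --- any additional contribution from $n \ge 2$ overshoots, which is precisely where the hypothesis $|H| \ge 5$ is used. Once these two observations are in place, the identification of $\{gb_0, h_1 g b_0\}$ with $\{e, h_0\}$ and the final contradiction are immediate.
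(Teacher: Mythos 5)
Your proof is correct. It reaches the same contradiction as the paper but organizes the argument differently: the paper's proof immediately zooms in on the single translate $Ab$ that covers the identity (so $b=a^{-1}$ for some $a\in A$) and runs a three-way case analysis on $a\in H'$, $a=g$, $a=h_1g$, deriving in the first case the inequality $\lvert H\rvert\ge 2(\lvert H\rvert-2)$ and in the other two a collision $h_1^{\pm1}\in A\cap Ab$. You instead set up a global counting identity $n(\lvert H\rvert-2)+2m=\lvert H\rvert$ over the partition of $H$ induced by the translates, use $\lvert H\rvert\ge 5$ to force $n=m=1$, and then read off $\{gb_0,h_1gb_0\}=\{e,h_0\}$, which pins $h_1$ to $\{h_0,h_0^{-1}\}$. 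The two proofs use the same two ingredients --- the arithmetic fact that $2(\lvert H\rvert-2)>\lvert H\rvert$ and the interaction of the pair $\{g,h_1g\}$ with $H'$ --- but your version buys a complete description of $B$ restricted to the cosets $H$ and $g^{-1}H$ (which makes transparent exactly where each hypothesis is used, including why $h_1\notin\{h_0,h_0^{-1}\}$ is needed), at the cost of slightly more bookkeeping than the paper's direct case analysis. All the small verifications in your write-up check out: $gb\in H$ iff $h_1gb\in H$ iff $b\in g^{-1}H$, the cosets $H$ and $g^{-1}H$ are disjoint since $g\notin H$, and $gb_0\ne h_1gb_0$ because $h_1\ne e$.
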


\begin{proof}
Suppose, for contradiction, that $A$ is a left factor of $G$, i.e., 
$$
G = \bigsqcup_{b\in B} Ab.
$$
By Remark~\ref{rem:translation}, we assume $e \in B$.
Since by construction $e \notin A$, the identity element must be covered 
by some translate $Ab$ with $b \in B \setminus \{e\}$.
Thus, there exist $a \in A$ and $b \in B \setminus \{e\}$ such that $ab = e$ (so $b = a^{-1}$), 
and the condition of factorization implies
\[
A \cap Ab = \varnothing.
\]
We examine the three possibilities for the element $a \in A$:
\[
a\in H',\quad a = g,\quad\text{or}\quad a = h_1g.
\]

\begin{enumerate}
  \item[\textbf{Case}] $a\in H'$. 
    Since  $b=a^{-1}\in H$ and $A\cap Ab=\varnothing$, we have
    $     
      H'b\cup H' \subseteq H,
    $
    with $H'b\cap H'=\varnothing$. 
   Comparing the cardinalities gives
    \[
      \lvert H\rvert\ge\lvert H'b\cup H'\rvert
      = \lvert H'b\rvert + \lvert H'\rvert
      = 2(\lvert H\rvert -2),
    \]
    so $\lvert H\rvert\leq4$, contradicting $\lvert H\rvert\geq5$.
      
  \item[\textbf{Case}] $a = g$.  Then $b=g^{-1}\notin H$ and $\{g,h_1g\}b=\{e,h_1\}$.  
  But $h_1\in H'\subset A$, so $h_1\in A\cap A b$, a contradiction.

  \item[\textbf{Case}] $a = h_1g$.  Then $b = g^{-1}h_1^{-1}\notin H$    
  and $\{g,h_1g\}b = \{e,h_1^{-1}\}$.  
  Since $h_1^{-1}\in H'\subset A$, we get $h_1^{-1}\in A\cap A b$, again a contradiction.    
\end{enumerate}

Having ruled out all cases, no such $B$ can exist, and $A$ is not a left factor of $G$.
\end{proof}

\begin{lemma}
\label{lemma:groups of order 8}
Let $G$ be a group of order $8$, and let $a,b\in G$ be two elements of order $4$ with $\langle a\rangle\neq\langle b\rangle$.
Then
\[
  A = \{a,a^2,a^3,b\}
\]
is not a left factor of $G$.
\end{lemma}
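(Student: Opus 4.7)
The plan is to suppose, for contradiction, that $A$ is a left factor of $G$, and then to rule out every possible complement by a short case analysis. Since $|A|=4$ and $|G|=8$, any complement $B$ satisfies $|B|=2$; by Remark~\ref{rem:translation} I may assume $e\in B$. Writing $B=\{e,c\}$ with $c\ne e$, the factorization condition becomes $A\sqcup Ac=G$, which in particular forces $e\in Ac$, i.e.\ $c\in A^{-1}=\{a,a^2,a^3,b^{-1}\}$. So only four candidates for $c$ remain, and I will eliminate each one by exhibiting an element of $A\cap Ac$.

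Three of the four candidates fall to a single observation: $A$ contains all of $\langle a\rangle\setminus\{e\}$. Hence for any $c\in\langle a\rangle\setminus\{e\}$ the set $(\langle a\rangle\setminus\{e\})c=\langle a\rangle\setminus\{c\}$ is contained in $Ac$, and it meets $A\cap\langle a\rangle=\langle a\rangle\setminus\{e\}$ in two elements. This dispatches $c\in\{a,a^2,a^3\}$ with no further work.

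The real obstacle is the remaining case $c=b^{-1}$, for which I need a structural input peculiar to order $8$. If $\langle a\rangle\cap\langle b\rangle=\{e\}$, then $|\langle a\rangle\langle b\rangle|=|\langle a\rangle|\cdot|\langle b\rangle|=16>|G|$, which is impossible. So $\langle a\rangle\cap\langle b\rangle$ is nontrivial and must coincide with the unique order-$2$ subgroup of each cyclic group, forcing $\langle a^2\rangle=\langle b^2\rangle$ and therefore $a^2=b^2$. With this identity in hand, $a^2\cdot b^{-1}=b^2\cdot b^{-1}=b\in A$, so $A\cap Ab^{-1}\ne\varnothing$ as well. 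All four candidates being excluded, $A$ is not a left factor of $G$.
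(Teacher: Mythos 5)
Your proof is correct and follows essentially the same route as the paper: both reduce to the four candidates $c\in A^{-1}=\{a,a^2,a^3,b^{-1}\}$ for the nonidentity element of the complement and eliminate each by a direct check. The only divergence is in the case $c=b^{-1}$, where the paper observes that $b^{-1}=b^3$ lies in neither $A$ nor $Ab^{-1}$ (so the two translates cannot cover $G$), while you instead derive $a^2=b^2$ from $|\langle a\rangle\langle b\rangle|\le 8$ and exhibit $b=a^2b^{-1}\in A\cap Ab^{-1}$; both arguments are valid.
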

\begin{proof}
    Suppose to the contrary that there is an element $x\in G$ such that
    \[
    G=A\cup Ax \text{ and } A\cap Ax=\varnothing.
    \]
    Then $e\in Ax$ and hence $x=a^{-i}$ for some $i\in\{1,2,3\}$, or $x=b^{-1}$.
    If $x=a^{-i}$, then $a\in A\cap A x$, contradicting disjointness.
    If $x=b^{-1}=b^3$, then
    \[
    Ax=\{ab^3,a^2b^3,a^3b^3,e\}.
    \]
    Since $b^3\notin A$ and also $b^3\notin A x$,
    the element $b^3$ is not contained in $A\cup A x=G$, a clear contradiction.
\end{proof}

\section{Proof of the Theorem}
\label{section:proof theorem}
As already noted, the “if” direction follows directly from Lemma \ref{lemma:small subset}. 
We are now ready to establish the “only if” part of the theorem.

\begin{proof}
Let $G$ be a finite group in which every Lagrange subset is a factor. 
We may assume that $G$ is not a cyclic group of prime order.

First, suppose that $|G|$ is even. By Lemma~\ref{lemma:small subset}\,(i), 
every non-identity element of $G$ must have even order. 
It follows that $G$ is a $2$-group.
Since $G$ cannot have a proper subgroup of order greater than $4$ (by Lemma~\ref{lemma:main}), 
the order of $G$ must be $4$ or $8$.
If $|G|=4$, $G$ is either $C_4$ or $C_2^2$, both of which appear in the theorem's list.
If $|G|=8$, Lemma~\ref{lemma:groups of order 8} implies that $G$ cannot contain 
two distinct cyclic subgroups of order $4$. 
This eliminates the quaternion group $Q_8$ and the product $C_4 \times C_2$. 
Since $C_2^3$ is a solution, the only remaining candidates to check are $D_4$ and $C_8$.

Next, suppose that $|G|$ is odd. Then Lemma~\ref{lemma:main} implies that the only prime dividing $|G|$ 
is $3$. 
Furthermore, $|G|$ cannot be $27$ or larger, as such a group would contain a subgroup of order $9$. 
Thus, the only possible odd composite order is $9$. 
The elementary abelian group $C_3^2$ is a solution, so we must only check $C_9$.

It remains to show that each of the three groups 
\[
D_4,\ C_8,\ C_9
\]
admits a Lagrange subset which is not a left factor.

\vspace{1ex}
\noindent\textbf{Case} $G = D_4$.
Let
\[
G = \langle a,b \mid a^4 = b^2 = e,\; b a b = a^{-1}\rangle.
\]
Consider the Lagrange subset
\[
A = \{a,a^2,b,a^2b\}\subset G.
\]
If $A$ were a left factor, then there would be some $x\in G$ with
$e\in Ax$, and hence $x\in\{a^{-1},a^2,b,a^2b\}$.
Using that $a^2$ is central in $G$, we check:
\[
\begin{aligned}
Aa^{-1}   &= \{e,a,ba^{-1},ba\},\\
Aa^2      &= \{a^3,e,ba^2,b\},  \\
Ab        &= \{ab,a^2b,e,a^2\}, \\
Aa^2b    &= \{a^3b,b,a^2,e\}.
\end{aligned}
\]
In all cases, $A \cap Ax \neq \varnothing$, contradicting the disjointness required for a factorization.  Therefore $A$ is not a left factor of $D_4$.

\vspace{1ex}
\noindent\textbf{Case} $G = C_8$.
Let
\[
G = \langle a \mid a^8 = e\rangle.
\]
Consider the Lagrange subset
\[
A = \{a,a^2,a^3,a^5\}.
\]
If $A$ were a left factor, there would be some $x\in G$ with $e\in A x$, forcing
\[
x\in \{a^{-1},a^{-2},a^{-3},a^{-5}\}.
\]
A direct check shows that for each such $x$, the right translate
$Ax$ meets $A$ non‑trivially. Hence $A$ cannot be a left factor of $C_8$.

\vspace{1ex}
\noindent\textbf{Case} $G = C_9$.
Let
\[
G = \langle a \mid a^9 = e\rangle,
\]
and consider the set
\[
A = \{a,a^2,a^4\}.
\]
Since $\lvert A\rvert = 3$ divides $\lvert G\rvert = 9$, $A$ is a Lagrange subset.
If $A$ were a left factor, one of its right translates would have to contain the identity.  Checking the candidates
\[
x \in\{a^{-1},a^{-2},a^{-4}\},
\]
one finds that only $x=a^{-4}=a^5$ yields a translate disjoint from $A$:
\[
Aa^5 = \{e,a^6,a^7\}.
\]
Thus any factorization would use the two translates $A$ and $Aa^5$, 
leaving the remaining set to be
\[
G \setminus \bigl(A \cup A a^5\bigr) = \{a^3,a^5,a^8\}.
\]
But no right translate of $A$, which always has the form $\{ax,a^2x,a^4x\}$,
can equal this set.
Hence $A$ is not a left factor of $C_9$.

Thus we conclude that any finite group not isomorphic to one of the groups
listed in the theorem fails to have the strong CFS property, completing the proof.
\end{proof}

\end{document}